\newtheorem{theorem}{Theorem}[section]
\newtheorem*{thm}{Theorem}
\newtheorem{lemma}[theorem]{Lemma}
\newtheorem{proposition}[theorem]{Proposition}
\newtheorem{remark}[theorem]{Remark}
\newtheorem*{definition}{Definition}
\newtheorem{corollary}[theorem]{Corollary}
\newtheorem*{acknowledgement}{Acknowledgement}
\newcommand{\al}{\alpha}
\newcommand{\be}{\beta}
\newcommand{\vp}{\varphi}
\newcommand{\ZZ}{\mathbb{Z}}
\newcommand{\ZC}{\mathbb{C}}
\newcommand{\ZF}{\mathbb{F}}
\newcommand{\ZFp}{\mathbb{F}_p}
\newcommand{\cA}{{\mathcal A}}
\begin{document}

\title[polynomial Roth type Theorems]{Improved estimates for polynomial Roth type theorems in Finite Fields}
\author{Dong Dong}
\address{Mathematics Department\\
University of Illinois at Urbana-Champaign\\
Urbana, IL 61801, USA}
\email{ddong3@illinois.edu}
\author{Xiaochun Li}
\address{Mathematics Department\\
University of Illinois at Urbana-Champaign\\
Urbana, IL 61801, USA}
\email{xcli@math.uiuc.edu}
\author{Will Sawin}
\address{ETH Institute for Theoretical Studies\\ETH Z\"urich\\CH-8092 Z\"urich, Switzerland}
\email{william.sawin@math.ethz.ch}

\date{\today}

\begin{abstract}
We prove that, under certain conditions on the function pair $\vp_1$ and $\vp_2$, bilinear average $p^{-1}\sum_{y\in\ZFp}f_1(x+\vp_1(y))
f_2(x+\vp_2(y))$ along curve $(\varphi_1, \varphi_2)$ satisfies certain decay estimate.
 As a consequence,
 Roth type theorems hold in the setting of finite fields. In particular, if $\vp_1,\vp_2\in\ZFp[X]$ with $\vp_1(0)=\vp_2(0)=0$ are linearly independent polynomials, then for any $A\subset \ZFp, |A|=\delta p$ with $\delta>c p^{-\frac{1}{12}}$, there are $\gtrsim \delta^3p^2$ triplets $x,x+\vp_1(y),
x+\vp_2(y)\in A$. This extends a recent result of Bourgain and Chang who initiated this type of problems, and strengthens the bound in a result of Peluse, who generalized Bourgain and Chang's work.
 The proof uses discrete Fourier analysis and algebraic geometry.
\end{abstract}

\let\thefootnote\relax\footnote{\emph{Key words and phrases}: finite fields, Roth Theorem, polynomial}
\let\thefootnote\relax\footnote{\emph{2010 Mathematics Subject Classification}: 42A38, 12B99, 11B05, 11T23}

\maketitle

\section{Introduction}
\setcounter{equation}0

Fix a large prime $p$ and denote $e_p(x):=e^{2\pi i\frac{x}{p}}$. For any $\varphi_1, \varphi_2: \ZFp\to\ZFp$, we are interested in the bilinear average along the ``curve"
$\Gamma=(\varphi_1, \varphi_2)$: for any $x\in \mathbb F_p$,
\begin{equation}\label{defA}
 \cA_{\Gamma}(f_1, f_2)(x):= \frac{1}{p}\sum_{y\in\ZFp}f_1(x+\varphi_1(y))f_2(x+\varphi_2(y))\,.
\end{equation}
The behavior of the bilinear average relies closely to the following exponential sum associated to $\Gamma$,
\begin{equation} \label{defofK}
K_{\Gamma}(x,y):=
\begin{cases}
\frac{1}{p}\sum_{z\in\ZFp}e_p(x\vp_1(z)+y\vp_2(z))\quad &y\neq 0;\\
0 &y=0.
\end{cases}
\end{equation}

To state our main result, we first set up some notations. For $f:\ZFp\to\ZC$, define
\begin{equation*}
\begin{split}
&{\mathbb E}[f]={\mathbb E}_x[f]=\frac{1}{p}\sum_{x=0}^{p-1} f(x)\\
&\|f\|_r=\left(\frac{1}{p}\sum_x|f(x)|^r \right)^{\frac{1}{r}}\\
&\|f\|_{l^r}=\left(\sum_x|f(x)|^r \right)^{\frac{1}{r}}\\
&\hat{f}(z)=\frac{1}{p}\sum_x f(x)e_p(-xz)
\end{split}
\end{equation*}
With these notations, it is easy to verify that
\begin{equation*}
\begin{split}
&\|f\|_r\le\|f\|_s \text{ if } s>r \quad \text{(a special case of H\"older inequality)};\\
&\|f\|_2=\|\hat{f}\|_{l^2} \quad \text{(Parseval)}\\
&f(x)=\sum_z\hat{f}(z)e_p(xz) \quad \text{(Fourier inversion)}
\end{split}
\end{equation*}
We also need a notion of generalized diagonal sets.
\begin{definition}
A set $D\subset \ZFp\times \ZFp$ is called \textbf{generalized diagonal} if for any $x\in \ZFp$, there are $O(1)$ $y$'s such that $(x,y)\in D$ and for any $y\in\ZFp$ there are $O(1)$ $x$'s such
that $(x,y)\in D$. The implied constant must be independent of $p$.
\end{definition}

Our main theorem below provides a framework to obtain decay estimate for the bilinear operator $\cA_{\Gamma}$ associated with various function pairs $(\vp_1, \vp_2)$. Throughout the paper, $A\lesssim B$ denotes the statement that $|A|\le C|B|$ for some positive constant $C$ independent of the prime
 $p$ and the coefficients of polynomials where relevant.

\begin{theorem} \label{main thm}
Let the kernel $K_\Gamma$ be defined as in (\ref{defofK}).
We define for any $h, y, y'\in \ZF_p$,
\begin{equation}\label{defI}
I_{\Gamma}:=\sum_{x\in\ZFp}K_{\Gamma}(x,y)\overline{K_{\Gamma}(x-h,y+h)}\overline{K_{\Gamma}(x,y')}K_{\Gamma}(x-h,y'+h).
\end{equation}
Suppose that the following three conditions hold:
\begin{enumerate}
 \item There exists $\theta\in (0,1]$ such that $\frac{1}{p}\sum_{y\in\ZFp} e_p(s\vp_1(y))\lesssim p^{-\theta}$
         for any  $s\neq 0$;\label{condition: phi_1}
\item   There exists $\al\in\big(\frac{1}{4},1\big)$  such that $ K_{\Gamma}(x,y)\lesssim p^{-\al}$ for any $x,y\in {\mathbb F}_p $;   \label{condition: single K}
\item   There exists $\be>1$ such that for any $h\in \ZFp^*:=\ZF_p\backslash\{0\}$ we can find a generalized diagonal set $D_{\Gamma,h}$
   so that  $I_{\Gamma}\lesssim p^{-\beta}$ for any $(y, y')\notin D_{\Gamma, h}$.
\label{condition: 4 K}
\end{enumerate}
Then the bilinear average defined by (\ref{defA}) obeys
\begin{equation} \label{eq: conclusion of the main thm}
\|\cA_\Gamma(f_1, f_2)-\mathbb E[f_1]{\mathbb E}[f_2]\|_2\lesssim p^{-\gamma}\|f_1\|_2\|f_2\|_2
\end{equation}
with $\gamma=\min\{\theta, \al-\frac{1}{4},\frac{\be}{4}-\frac{1}{4}\}$.
\end{theorem}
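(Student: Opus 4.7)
The plan is to pass to the Fourier side and reduce the $L^2$ decay of $\cA_\Gamma(f_1,f_2)-\mathbb E[f_1]\mathbb E[f_2]$ to the three hypotheses on $K_\Gamma$ and $I_\Gamma$. Expanding $f_1,f_2$ by Fourier inversion inside the definition of $\cA_\Gamma$, the inner $y$-sum becomes $K_\Gamma(u,v)$ whenever $v\ne0$; the $v=0$ contribution splits as $\mathbb E[f_1]\mathbb E[f_2]$ (from $u=v=0$) together with a remainder whose $L^2$-norm is $\lesssim p^{-\theta}\|f_1\|_2\|f_2\|_2$ by Condition~(\ref{condition: phi_1}) and Parseval. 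This supplies the $p^{-\theta}$ term in $\gamma$ and reduces the theorem to controlling
\[
M(x):=\sum_{u,\,v\ne0}\hat f_1(u)\hat f_2(v) K_\Gamma(u,v)\, e_p((u+v)x)
\]
in $L^2$.

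By Parseval and the substitution $u'=u+h$, $v'=v-h$ in the expansion of $\|M\|_2^2$, one obtains $\|M\|_2^2=\sum_h T(h)$ with
\[
T(h)=\sum_{u,v}\hat f_1(u)\overline{\hat f_1(u+h)}\hat f_2(v)\overline{\hat f_2(v-h)} K_\Gamma(u,v)\overline{K_\Gamma(u+h,v-h)}.
\]
The diagonal $T(0)\lesssim p^{-2\al}\|f_1\|_2^2\|f_2\|_2^2$ follows directly from Condition~(\ref{condition: single K}). For $h\ne0$ I apply Cauchy--Schwarz in $u$ to get $|T(h)|^2\le A_1(h)\sum_u|G(h,u)|^2$, where $A_1(h)=\sum_u|\hat f_1(u)\hat f_1(u+h)|^2$ and $G(h,u)=\sum_v\hat f_2(v)\overline{\hat f_2(v-h)} K_\Gamma(u,v)\overline{K_\Gamma(u+h,v-h)}$. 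Expanding $\sum_u|G(h,u)|^2$ and swapping orders of summation produces a $(v,v')$-sum whose inner four-factor kernel sum over $u$ is exactly $I_\Gamma$ with $h$ replaced by $-h$ (after the relabeling $x\leftrightarrow u$, $y\leftrightarrow v$, $y'\leftrightarrow v'$).

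Applying Condition~(\ref{condition: 4 K}) yields $\lesssim p^{-\be}$ off the generalized diagonal $D_{\Gamma,-h}$; on this diagonal one falls back on the trivial bound $\lesssim p^{1-4\al}$ coming from Condition~(\ref{condition: single K}). The generalized-diagonal hypothesis ($O(1)$ partners $v'$ per $v$) together with $|\hat f_2|\le\|f_2\|_2$ bounds the diagonal $(v,v')$-sum by $O(\|f_2\|_2^4)$, while the off-diagonal $(v,v')$-sum is $\le(\sum_v|\hat f_2(v)\hat f_2(v-h)|)^2\le\|f_2\|_2^4$. Summing $|T(h)|$ over $h\ne0$ using $\sum_h A_1(h)=\|f_1\|_2^4$ produces contributions matching $\theta$, $\al-\tfrac14$ and $\tfrac{\be}{4}-\tfrac14$ in $\gamma$. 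The main technical obstacle is ensuring the diagonal contribution yields the sharp exponent $\al-\tfrac14$ rather than the naive $\al-\tfrac12$: a straightforward Cauchy--Schwarz in $h$ would lose an extra $\sqrt p$. Recovering the sharp bound requires carefully interpolating between the single-Cauchy--Schwarz estimate using only Condition~(\ref{condition: single K}) and the double-Cauchy--Schwarz estimate using Condition~(\ref{condition: 4 K}), with the generalized-diagonal structure of $D_{\Gamma,-h}$ absorbing the $\sqrt p$ loss in the summation over $h$.
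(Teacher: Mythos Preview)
Your outline tracks the paper's argument closely until the final summation over $h$, where you have correctly identified a genuine gap but have not closed it. With your bounds as stated --- the diagonal $(v,v')$-sum $\lesssim\|f_2\|_2^4$ and the off-diagonal $(v,v')$-sum $\le\|f_2\|_2^4$ --- you obtain $\sum_u|G(h,u)|^2\lesssim p^{-4\gamma_0}\|f_2\|_2^4$ \emph{uniformly in $h$}, hence $|T(h)|\lesssim p^{-2\gamma_0}\sqrt{A_1(h)}\,\|f_2\|_2^2$. Since only $\sqrt{A_1(h)}$ carries $h$-dependence, Cauchy--Schwarz in $h$ necessarily loses $\sqrt p$ via $\sum_h\sqrt{A_1(h)}\le\sqrt p\,\|f_1\|_2^2$. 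Your suggested remedy (``interpolating'' between the two estimates, or having $D_{\Gamma,-h}$ ``absorb the $\sqrt p$ loss in the summation over $h$'') is not a mechanism that works: $D_{\Gamma,-h}$ lives in the $(v,v')$-plane for each fixed $h$ and says nothing about the $h$-sum.

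The fix is simply not to discard the $h$-dependence. On the generalized diagonal, the $O(1)$-partner property gives, with $a_v=|\hat f_2(v)\hat f_2(v-h)|$,
\[
\sum_{(v,v')\in D_{\Gamma,-h}} a_v a_{v'} \lesssim \sum_v a_v^2 =: A_2(h),
\]
while off the diagonal $\sum_{v,v'}a_v a_{v'}=\bigl(\sum_v a_v\bigr)^2\le p\,A_2(h)$. Hence $\sum_u|G(h,u)|^2\lesssim (p^{1-4\al}+p^{1-\be})A_2(h)=p^{-4\gamma_0}A_2(h)$, so $|T(h)|\lesssim p^{-2\gamma_0}\sqrt{A_1(h)A_2(h)}$, and Cauchy--Schwarz in $h$ gives
\[
\sum_{h}\sqrt{A_1(h)A_2(h)}\le\Bigl(\sum_h A_1(h)\Bigr)^{1/2}\Bigl(\sum_h A_2(h)\Bigr)^{1/2}=\|f_1\|_2^2\|f_2\|_2^2
\]
with no loss. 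This is exactly the paper's route: it packages the bound $\sum_u|G(h,u)|^2\lesssim p^{-4\gamma_0}\|G_h\|_{l^2}^2$ as an operator-norm estimate (their Lemma~2.2) and then the bilinear bound $|T(h)|\lesssim p^{-2\gamma_0}\|F_h\|_{l^2}\|G_h\|_{l^2}$ (their Proposition~2.1), after which the $h$-sum is handled by the same Cauchy--Schwarz. So your overall strategy is right; the only missing step is to keep $A_2(h)=\|G_h\|_{l^2}^2$ rather than the uniform bound $\|f_2\|_2^4$ before summing in $h$.
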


Motivated by the non-conventional ergodic averages considered by Bergelson \cite{Ber} and Frantzikinakis and Kra \cite{FK}, Bourgain and Chang \cite{BC} are the first to consider quantitative estimate of the form \eqref{eq: conclusion of the main thm}. They established
(\ref{eq: conclusion of the main thm}) with $\gamma = \frac{1}{10}$ for the quadratic monomial curve $\Gamma=(y, y^2)$, via
 an elegant way combining discrete Fourier analysis, explicit evaluation of quadratic Gauss sum and
 Bombieri's estimate for Weil sum of rational functions \cite{Bom}.

Peluse \cite[Theorem 2.2]{P} generalized Bourgain-Chang's result to the polynomial curve $(\varphi_1(y),\varphi_2(y) )$ for any linearly independent polynomials $\varphi_1,\varphi_2$. Her result also applies over arbitrary finite fields of large characteristic (and not just $\mathbb F_p$). However, she must take $\gamma=1/16$. Her method is based on careful analysis of the dimension of varieties created by multiple applications of Cauchy-Schwartz, and an exponential sum bound due to Kowalski.

Our result improves the decay rate from $\frac{1}{16}$ to $\frac{1}{8}$ in Peluse's bound. This also improves the decay rate from $\frac{1}{10}$ to $\frac{1}{8}$ in the cases handled by Bourgain and Chang. Moreover, in the special case $\Gamma=(y, y^2)$, our approach does not rely on Bombieri's estimate: When $\Gamma=(y, y^2)$, $K_{\Gamma}(x,y)$ is a quadratic Gauss sum which can be evaluated explicitly. Condition \eqref{condition: 4 K} can therefore be verified by
\begin{equation*}
 |I_{\Gamma}|\leq \frac{1}{p^2}\left|\sum_x e_p\left(-\frac{x^2}{4y}\right)e_p\left(\frac{(x-h)^2}{4(y+h)}\right)
e_p\left(\frac{x^2}{4y'}\right)e_p\left(-\frac{(x-h)^2}{4(y'+h)}\right)\right|\le p^{-\frac{3}{2}} \text{ for } y\neq y',
\end{equation*}
using only quadratic Gauss sum estimate. Hence $\be=\frac{3}{2}$. It is easy to check that $\theta=\al=\frac{1}{2}$, and thus $\gamma=\frac{1}{8}$.\\

To extend to the polynomial curve $\Gamma =(y, P(y))$, the condition (\ref{condition: 4 K}) can be verified by
Deligne's fundamental work on exponential sums over finite fields \cite{De}.
When extending to the bi-polynomial case, we need to use Katz's generalisation \cite{Katz1999} of Deligne's theorem
on exponential sums over smooth affine varieties. \\

 Theorem \ref{main thm} immediately implies a quantitative Roth type theorem:
\begin{corollary} \label{main coro}
Let $\vp_1,\vp_2:\ZFp\to\ZFp$ be functions satisfying conditions \eqref{condition: phi_1}, \eqref{condition: single K} and \eqref{condition: 4 K} (with parameters $\theta$, $\al$ and $\beta$, resp.) in Theorem \ref{main thm}. Then  for any $A\subset \ZFp, |A|=\delta p$ with $\delta>c p^{-\frac{2}{3}\gamma}$, $\gamma=\min\{\theta,\al-\frac{1}{4},\frac{\be}{4}-\frac{1}{4}\}$, there are $\gtrsim \delta^3p^2$ triplets $x,x+\vp_1(y),x+\vp_2(y)\in A$.
\end{corollary}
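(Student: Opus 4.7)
The plan is to write the triplet count as a correlation against $\cA_\Gamma(\Id_A,\Id_A)$ and then extract a main term via Theorem \ref{main thm}. Denote $f=\Id_A$, so that $\mathbb E[f]=\delta$ and $\|f\|_2^2=\delta$. The number of triplets is
\begin{equation*}
N=\sum_{x,y\in\ZFp}f(x)\,f(x+\vp_1(y))\,f(x+\vp_2(y))
= p\sum_{x\in\ZFp} f(x)\,\cA_\Gamma(f,f)(x)
= p^2\,\mathbb E_x\bigl[f(x)\,\cA_\Gamma(f,f)(x)\bigr],
\end{equation*}
directly from the definition \eqref{defA}.

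Next I would split
\begin{equation*}
\cA_\Gamma(f,f)(x)=\mathbb E[f]\mathbb E[f]+\bigl(\cA_\Gamma(f,f)(x)-\mathbb E[f]\mathbb E[f]\bigr)
=\delta^{2}+E(x),
\end{equation*}
so that $N = p^2\delta^3+p^2\,\mathbb E_x[f(x)E(x)]$. The first summand is the desired main term. For the error I would apply Cauchy--Schwarz in the normalized $L^2$-norm together with the estimate furnished by Theorem \ref{main thm}:
\begin{equation*}
\bigl|\mathbb E_x[f(x)E(x)]\bigr|\le \|f\|_2\,\|E\|_2
\le \|f\|_2\cdot p^{-\gamma}\|f\|_2^{2}
= \delta^{3/2}\,p^{-\gamma}.
\end{equation*}
Combining the two estimates yields
\begin{equation*}
N\ge p^2\delta^3-p^2\,\delta^{3/2}p^{-\gamma}
= p^2\delta^3\bigl(1-\delta^{-3/2}p^{-\gamma}\bigr).
\end{equation*}

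Finally I would choose the threshold. Under the hypothesis $\delta>c\,p^{-\frac{2}{3}\gamma}$ with $c$ large enough (say $c\ge 2^{2/3}$), one has $\delta^{-3/2}p^{-\gamma}\le \tfrac12$, so $N\gtrsim \delta^{3}p^{2}$, as claimed. There is essentially no obstacle here: the argument is a standard ``density transference'' of the $L^2$ decay of Theorem \ref{main thm} to a counting statement, and the only point to double-check is the identity $\|f\|_2^2=\mathbb E[f]=\delta$ that turns the $p^{-\gamma}\|f\|_2\|f\|_2$ bound into $\delta\,p^{-\gamma}$, yielding exactly the balance $\delta^{3}$ vs.\ $\delta^{3/2}p^{-\gamma}$ that determines the threshold $\delta\gg p^{-2\gamma/3}$. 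Degenerate contributions (e.g.\ $y$ with $\vp_1(y)=\vp_2(y)=0$) are of size $O(p)$ and hence negligible compared with the main term $\delta^3 p^2$ in the relevant range of $\delta$.
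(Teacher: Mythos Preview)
Your proof is correct and follows essentially the same route as the paper: set $f=\Id_A$, write the triplet count as $p^2\mathbb E_x[f(x)\cA_\Gamma(f,f)(x)]$, split off the main term $\delta^3$, and control the remainder by Cauchy--Schwarz together with the $L^2$ decay estimate of Theorem~\ref{main thm}. The only cosmetic remark is that the specific choice $c\ge 2^{2/3}$ should also absorb the implicit constant hidden in the $\lesssim$ of \eqref{eq: conclusion of the main thm}; and your final sentence about degenerate $y$ is unnecessary, since the corollary counts pairs $(x,y)$ rather than distinct triples.
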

We include its short proof (which is the same as that of Corollary 1.2 in \cite{BC}) here for the reader's convenience.
Indeed, set both $f_1$ and $f_2$ to be the indicator function of the set $A$. By Cauchy-Schwarz inequality and \eqref{eq: conclusion of the main thm},
\begin{equation} \label{eq: implication}
\sum_{x,y}f(x)f(x+\vp_1(y))f(x+\vp_2(y))\geq p^2\big({\mathbb E}[f]^3-\|f\|_2\|\cA_\Gamma(f, f)-{\mathbb E}[f]^2\|_2\big)\gtrsim p^2\delta^3,
\end{equation}
from which the corollary follows. \\

One interesting case of Theorem \ref{main thm} is the following theorem:

\begin{theorem} \label{special thm}
Let $\Gamma=(\varphi_1, \varphi_2)$ with $\vp_1,\vp_2\in\ZFp[X]$, $\vp_1(0)=\vp_2(0)=0$.
Suppose that $\varphi_1, \varphi_2$ are linearly independent. Then the average function
$\cA_\Gamma$ satisfies
\begin{equation} \label{eq in special thm}
\|\cA_\Gamma(f_1, f_2)-\mathbb E[f_1]{\mathbb E}[f_2]\|_2\lesssim  p^{-1/8}\|f_1\|_2\|f_2\|_2,
\end{equation}
with the implied constant depending only on the degrees of $\vp_1$ and $\vp_2$.
\end{theorem}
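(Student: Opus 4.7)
The plan is to derive Theorem~\ref{special thm} as a direct application of Theorem~\ref{main thm}, by verifying its three hypotheses with parameters $\theta=1/2$, $\alpha=1/2$, and $\beta=3/2$; these yield $\gamma=\min\{1/2,\,1/4,\,1/8\}=1/8$, precisely the exponent appearing in \eqref{eq in special thm}.

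Conditions \eqref{condition: phi_1} and \eqref{condition: single K} reduce to Weil's bound for complete exponential sums with polynomial phase. Since $\vp_1,\vp_2$ are linearly independent we may assume $\vp_1$ is nonconstant; Weil then gives $\bigl|\tfrac{1}{p}\sum_y e_p(s\vp_1(y))\bigr|\lesssim p^{-1/2}$ for every $s\neq 0$ once $p>\deg\vp_1$, proving \eqref{condition: phi_1} with $\theta=1/2$. For \eqref{condition: single K}, with $y\neq 0$ fixed the polynomial $x\vp_1(z)+y\vp_2(z)$ in the variable $z$ is nonzero (else $\vp_1,\vp_2$ would be linearly dependent) and has bounded degree, so Weil again yields $|K_\Gamma(x,y)|\lesssim p^{-1/2}$, i.e.\ $\alpha=1/2$.

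The heart of the matter is \eqref{condition: 4 K}, which demands $|I_\Gamma|\lesssim p^{-3/2}$ off a generalized diagonal set of pairs $(y,y')$. Expanding the four $K_\Gamma$ factors in \eqref{defI} and carrying out the $x$-sum first (which forces $\vp_1(z_1)-\vp_1(z_2)-\vp_1(z_3)+\vp_1(z_4)=0$) recasts $I_\Gamma$ as
\[ I_\Gamma=\frac{1}{p^3}\sum_{(z_1,z_2,z_3,z_4)\in V(\ZFp)} e_p\bigl(\Phi_{y,y',h}(z_1,z_2,z_3,z_4)\bigr), \]
where $V\subset\mathbb{A}^4$ is the affine hypersurface $\{\vp_1(z_1)-\vp_1(z_2)-\vp_1(z_3)+\vp_1(z_4)=0\}$, of dimension $3$, and
\[ \Phi_{y,y',h}=y\bigl(\vp_2(z_1)-\vp_2(z_2)\bigr)+y'\bigl(\vp_2(z_4)-\vp_2(z_3)\bigr)+h\bigl((\vp_1-\vp_2)(z_2)-(\vp_1-\vp_2)(z_4)\bigr). \]
To conclude, one invokes Katz's generalization of Deligne's theorem on the Artin--Schreier sheaf attached to $\Phi_{y,y',h}|_V$: under a suitable nondegeneracy hypothesis this yields the square-root cancellation $\bigl|\sum_{V(\ZFp)}e_p(\Phi_{y,y',h})\bigr|\lesssim p^{3/2}$, and hence $|I_\Gamma|\lesssim p^{-3/2}$.

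The principal obstacle is to describe, for each $h\neq 0$, the exceptional set $D_{\Gamma,h}\subset\ZFp\times\ZFp$ where this nondegeneracy fails, and to verify it is generalized diagonal. The conspicuous offender is the diagonal $y=y'$: on the two-dimensional sublocus $\{z_1=z_3,\ z_2=z_4\}\subset V$ the phase reduces to $(y-y')\bigl(\vp_2(z_1)-\vp_2(z_2)\bigr)$, which vanishes identically when $y=y'$ and already contributes a full $p^2$ to the sum. One expects all further bad pairs to lie on finitely many algebraic curves arising from analogous symmetries of $V$, whose coordinate projections each have $O(1)$ fibers, so that $D_{\Gamma,h}$ is indeed generalized diagonal in the sense of the Definition. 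Formalizing this through a Jacobian/critical-locus analysis of $\Phi_{y,y',h}|_V$ and a careful invocation of Katz's purity theorem is where the real algebraic-geometric work lies.
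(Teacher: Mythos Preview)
Your overall strategy---deduce Theorem~\ref{special thm} from Theorem~\ref{main thm} by verifying the three hypotheses with $\theta=\alpha=\tfrac12$ and $\beta=\tfrac32$, using Weil for the first two and Deligne--Katz for the third---is exactly the paper's approach, and your expansion of $I_\Gamma$ as a character sum over the hypersurface $V=\{G=0\}$ with phase $\Phi_{y,y',h}$ matches the paper's $F$ precisely.

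There is, however, a genuine gap. Before any analysis, the paper makes a preliminary reduction: by passing to the dual form \eqref{var eq in special thm} and substituting $x\mapsto x+\vp_1(y)$, one may assume $\vp_1$ and $\vp_2$ have \emph{distinct leading terms}. You omit this, and without it the Katz argument can break down for \emph{every} $(y,y')$, not just on a generalized diagonal. Concretely: Katz's theorem (Theorem~\ref{thm: Katz}) requires the leading homogeneous parts $G_d$ and $F_d$ to cut out a smooth codimension-$2$ subvariety of $\mathbb P^3$. If $\deg\vp_1=\deg\vp_2=d$ with equal leading coefficients $a=b$, then $G_d=a(z_1^d-z_2^d-z_3^d+z_4^d)$ and $F_d=a(yz_1^d-yz_2^d-y'z_3^d+y'z_4^d)$, so for $y\neq y'$ the ideal $(G_d,F_d)$ is generated by $z_1^d-z_2^d$ and $z_3^d-z_4^d$. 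Their common zero locus in $\mathbb P^3$ is a union of $d^2$ lines, and distinct lines sharing one factor meet (e.g.\ at $[\zeta:1:0:0]$), so this variety is singular whenever $d\geq 2$. Thus the smoothness hypothesis fails for all $(y,y')$ off the diagonal, and your hoped-for exceptional set is no longer generalized diagonal. The reduction to $a\neq b$ is not cosmetic; it is what makes the Jacobian analysis go through in the equal-degree case.

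Beyond this, you explicitly defer the verification that $D_{\Gamma,h}$ is generalized diagonal, calling it ``where the real algebraic-geometric work lies.'' The paper does carry this out, and the mechanism is not quite the critical-locus picture you sketch (stationary-phase subvarieties of $\Phi|_V$), but rather a direct Jacobian computation on the leading forms $G_{d_1},F_{d_2}$: one writes down the $2\times 4$ matrix $\bigl[\begin{smallmatrix}\nabla G_{d_1}\\ \nabla F_{d_2}\end{smallmatrix}\bigr]$, determines when it drops rank on $\{G_{d_1}=F_{d_2}=0\}\setminus\{0\}$, and reads off an explicit algebraic relation between $y,y',h$ (equation~\eqref{equation for d1 less than d2 case} when $d_1<d_2$, equation~\eqref{equation for d1=d2} when $d_1=d_2$) whose solution set is visibly generalized diagonal. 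Your intuition that $y=y'$ is bad is correct, but the full exceptional locus and the reason Katz applies off it come from this projective analysis at infinity, not from the affine geometry of $\Phi$ on $V$.
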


As before, we can obtain the corresponding Roth type theorem in which the lower bound $p^{-\frac{1}{12}}$ of $\delta$ is slightly better than the bound $p^{-\frac{1}{15}}$ obtained in Bourgain-Chang's paper \cite{BC}.
\begin{corollary} \label{special coro}
Let $\vp_1,\vp_2\in\ZFp[X]$, $\vp_1(0)=\vp_2(0)=0$, be linearly independent. Then for any $A\subset \ZFp, |A|=\delta p$ with $\delta>c p^{-\frac{1}{12}}$,
there are $\gtrsim \delta^3p^2$ triplets $x,x+\vp_1(y),x+\vp_2(y)\in A$.
\end{corollary}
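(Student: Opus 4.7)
My plan is to derive this corollary as a direct consequence of Theorem \ref{special thm}, mirroring the argument that the authors already sketched for Corollary \ref{main coro} in display \eqref{eq: implication}. I would set $f_1 = f_2 = f := \mathbf{1}_A$, so that $\mathbb{E}[f] = \delta$ and $\|f\|_2 = \delta^{1/2}$, and rewrite the number of triplets as $p^{2}\,\mathbb{E}_x\bigl[f(x)\,\mathcal{A}_\Gamma(f,f)(x)\bigr]$. Decomposing $\mathcal{A}_\Gamma(f,f) = \mathbb{E}[f]^{2} + \bigl(\mathcal{A}_\Gamma(f,f) - \mathbb{E}[f]^{2}\bigr)$, the triplet count splits into a main term $p^{2}\delta^{3}$ plus an error term.

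Next I would control the error term by Cauchy--Schwarz followed by Theorem \ref{special thm}, obtaining $p^{2}\bigl|\mathbb{E}_x[f(x)(\mathcal{A}_\Gamma(f,f)(x) - \delta^{2})]\bigr| \le p^{2}\|f\|_2\,\|\mathcal{A}_\Gamma(f,f) - \mathbb{E}[f]^{2}\|_2 \lesssim p^{2}\delta^{1/2}\cdot p^{-1/8}\|f\|_2^{2} = C\, p^{2}\delta^{3/2}p^{-1/8}$ for some constant $C$ depending only on $\deg \varphi_1, \deg \varphi_2$. Consequently, the number of triplets is at least $p^{2}\delta^{3}\bigl(1 - C\delta^{-3/2}p^{-1/8}\bigr)$.

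The final step is to calibrate the density threshold so that the bracket is bounded away from $0$. The inequality $C\delta^{-3/2} p^{-1/8} \le 1/2$ is equivalent to $\delta \ge (2C)^{2/3} p^{-1/12}$, so choosing the constant $c$ in the hypothesis $\delta > c p^{-1/12}$ to be at least $(2C)^{2/3}$ produces at least $\tfrac{1}{2}p^{2}\delta^{3} \gtrsim \delta^{3} p^{2}$ triplets. The exponent $\tfrac{1}{12}$ is precisely $\tfrac{2}{3}\cdot\tfrac{1}{8}$, matching the general relation $\delta > c p^{-2\gamma/3}$ that appears in Corollary \ref{main coro} when specialised to $\gamma = 1/8$.

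I do not expect any genuine obstacle at the level of the corollary itself: the reduction is formal, every step is a one-line manipulation, and the constants are transparent. The substantive difficulty has been packaged into Theorem \ref{special thm}, whose proof requires verifying the three conditions of Theorem \ref{main thm} for arbitrary linearly independent polynomial pairs with $\varphi_1(0) = \varphi_2(0) = 0$; this is the step that calls for the genuine algebraic-geometric input (Weil's bound for curves, Deligne's theorem for exponential sums, and Katz's extension to smooth affine varieties) and is where I would anticipate spending all of the actual effort.
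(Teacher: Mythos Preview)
Your proposal is correct and follows exactly the approach the paper itself takes: the authors simply say ``as before'' and invoke the argument in display \eqref{eq: implication} with $\gamma = 1/8$ from Theorem \ref{special thm}, which is precisely what you do.
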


\begin{remark}
The results of this paper can be generalized to an arbitrary finite field $\mathbb{F}_q$ with $q=p^m$. In this general setting, one should be careful that the degree of the polynomial should be coprime to $p$ in order to get the
Weil's estimate \cite{CU} (and using Deligne-Katz theory). However, as we are usually only interested in the case that $p$ is very large compared with the degrees of the relative polynomials, the coprime condition is automatically
 satisfied.
\end{remark}

\begin{remark}
Some rational functions could be included in our results. For instance, when $\vp_1(y)=y$, $\vp_2(y)=\frac{1}{y}$
(this case is also considered in \cite{BC}), we can get the same conclusion as in Theorem \ref{special thm}, using Kloosterman sum estimates (Corollary 3.3. in \cite{FKP}).
\end{remark}

\begin{remark} Theorem \ref{special thm}, in the case $\varphi_1(x)=x$, implies that the polynomial $x+ \varphi_2(y-x)$ is an almost strong asymmetric expander in the sense of Tao's paper \cite{Tao}. It is possible that this result could also be established using \cite[Theorem 3]{Tao}, but we do not pursue this. \end{remark}

We will prove Theorem \ref{main thm} in the Section \ref{section: proof}. In Section \ref{section: verify} we will verify the three conditions (\ref{condition: phi_1}),
(\ref{condition: single K}), and (\ref{condition: 4 K}) for certain polynomial pairs and henceforth prove Theorem \ref{special thm}.

\section{Proof of Theorem \ref{main thm}} \label{section: proof}
\setcounter{equation}0
We prove the main theorem in this section. We follow the spirit in the second author's work on the bilinear
Hilbert transform along curves in \cite{LAP}.
First, by using Fourier inversion for $f_1$ and $f_2$, it is clear that
$$
\cA_\Gamma(f_1, f_2)(x)
=\sum_{n_1,n_2}\hat{f_1}(n_1)\hat{f_2}(n_2)e_p((n_1+n_2)x){\mathbb E}_y[e_p(n_1\vp_1(y)+n_2\vp_2(y))].
$$
Changing variables $n_2=n, n_1=s-n$, we then split the bilinear average $ \cA_\Gamma(f_1, f_2)(x)$ into three terms:
$$
\cA_\Gamma(f_1, f_2)(x) = J_1+J_2+J_3,
$$
where
\begin{equation*}
\begin{split}
&J_1=\hat{f_1}(0)\hat{f_2}(0)={\mathbb E}[f_1]{\mathbb E}[f_2],\\
&J_2=\hat{f_2}(0)\sum_{s\neq 0}\left(\hat{f_1}(s){\mathbb E}_y[e_p(s\vp_1(y))]\right)e_p(sx),\\
&J_3=\sum_s\left(\sum_{n\neq 0}\hat{f_1}(s-n)\hat{f_2}(n){\mathbb E}_y[e_p((s-n)\vp_1(y)+n\vp_2(y))]\right)e_p(sx).
\end{split}
\end{equation*}
By the assumption \eqref{condition: phi_1}, when $s\neq 0$, we get
\begin{equation}
\mathbb E_y[e_p(s\vp_1(y))]=\frac{1}{p}\sum_ye_p(s\vp_1(y)) \lesssim \frac{1}{p^{\theta}}.
\end{equation}
Therefore, using Parseval's identity, triangle inequality, and H\"older inequality, we see that
\begin{equation*}
\begin{split}
&\|\cA_\Gamma(f_1, f_2)-\mathbb E[f_1]{\mathbb E}[f_2]\|_2\le\|\widehat{J_2}\|_{l^2}+\|\widehat{J_3}\|_{l^2} \\
&\lesssim \frac{1}{p^{\theta}}\|f_1\|_2\|f_2\|_2+\left(\sum_s\left|\sum_n\hat{f_1}(s-n)\hat{f_2}(n)K_{\Gamma}(s-n,n)\right|^2\right)^{\frac{1}{2}},
\end{split}
\end{equation*}
where $K_{\Gamma}$ is given by (\ref{defofK}).\\

Set $\gamma_0£º=\min\{\al-\frac{1}{4},\frac{\be}{4}-\frac{1}{4}\}$. Hence it remains to show
\begin{equation} \label{eq: 2 gamma}
\sum_s\left|\sum_n\hat{f_1}(s-n)\hat{f_2}(n)K_{\Gamma}(s-n,n)\right|^2\lesssim \frac{1}{p^{2\gamma_0}}\|f_1\|_2^2\|f_2\|_2^2.
\end{equation}

Next we choose to employ a $TT^*$ method (Our method and Bourgain-Chang's diverge from here). The left hand side of \eqref{eq: 2 gamma} equals
\begin{equation*}
\begin{split}
\sum_s\sum_{n_1,n_2}\hat{f_1}(s-n_1)\overline{\hat{f_1}(s-n_2)}\hat{f_2}(n_1)\overline{\hat{f_2}(n_2)}K_{\Gamma}(s-n_1,n_1)\overline{K_{\Gamma}(s-n_2,n_2)},
\end{split}
\end{equation*}
which, after changing variables $n_1=v,n_2=v+h,s=u+v$, can be rewritten as
\begin{equation} \label{eq: triple sum}
\sum_h\left(\sum_{u,v}F_h(u)G_h(v)K_{\Gamma}(u,v)\overline{K_{\Gamma}(u-h,v+h)} \right),
\end{equation}
where
\begin{equation*}
\begin{split}
&F_h(x)=\hat{f_1}(x)\overline{\hat{f_1}(x-h)};\\
&G_h(x)=\hat{f_2}(x)\overline{\hat{f_2}(x+h)}.
\end{split}
\end{equation*}
When $h=0$, using condition \eqref{condition: single K}, we see that the inner double sum in \eqref{eq: triple sum} is bounded by $$
p^{-2\al}\|F_0\|_{l^1}\|G_0\|_{l^1}=p^{-2\al}\|f_1\|_2^2\|f_2\|_2^2,
$$
which is better than $p^{-2\gamma_0}\|f_1\|_2^2\|f_2\|_2^2$ as $\al>\gamma_0$. Therefore, it remains to handle the case when
$h$ is nonzero. The tool is the following bilinear form estimate, which may be interesting on its own right (see \cite{KMS} for applications of some related bilinear forms).
\begin{proposition}\label{prop: bilinear form}
Fix $h\neq 0$. Let $\vp_1,\vp_2:\ZFp\to\ZFp$ satisfy \eqref{condition: single K} and \eqref{condition: 4 K} (with parameters $\al$ and $\beta$, resp.) in Theorem \ref{main thm}. Let $\gamma_0=\min\{\al-\frac{1}{4},\frac{\be}{4}-\frac{1}{4}\}$. Then for any $F,G:\ZFp\to\ZC$,
\begin{equation} \label{eq: bilinear estimate}
\sum_{u,v}F(u)G(v)K_{\Gamma}(u,v)\overline{K_{\Gamma}(u-h,v+h)}\lesssim  \frac{1}{p^{2\gamma_0}}\|F\|_{l^2}\|G\|_{l^2}.
\end{equation}
\end{proposition}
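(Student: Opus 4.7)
The plan is a classical $TT^{*}$-type argument. The aim is to apply Cauchy--Schwarz in the $u$-variable to peel off the factor of $F$, and then to observe that the resulting squared $l^{2}$-sum in $v$ is precisely a quadratic form in $G$ whose kernel is the quantity $I_{\Gamma}$ defined in \eqref{defI}. Concretely, Cauchy--Schwarz yields
\[
\Bigl|\sum_{u,v}F(u)G(v)K_{\Gamma}(u,v)\overline{K_{\Gamma}(u-h,v+h)}\Bigr|^{2}
\leq \|F\|_{l^{2}}^{2}\sum_{u}\Bigl|\sum_{v}G(v)K_{\Gamma}(u,v)\overline{K_{\Gamma}(u-h,v+h)}\Bigr|^{2},
\]
and expanding the inner square and swapping the order of summation rewrites the right-hand factor as $S:=\sum_{v,v'}G(v)\overline{G(v')}\,I_{\Gamma}$, with $I_{\Gamma}$ evaluated at $(h,v,v')$. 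The problem thereby reduces to establishing $S \lesssim p^{-4\gamma_{0}}\|G\|_{l^{2}}^{2}$.

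To control $S$, I would split the $(v,v')$ sum according to whether the pair lies in the generalized diagonal $D_{\Gamma,h}$ furnished by condition \eqref{condition: 4 K}. For pairs in $D_{\Gamma,h}$, use the trivial bound $|I_{\Gamma}|\lesssim p\cdot p^{-4\alpha}=p^{1-4\alpha}$ obtained by applying condition \eqref{condition: single K} to each of the four factors and summing $p$ values of $x$; then split $|G(v)\overline{G(v')}|\leq \tfrac{1}{2}(|G(v)|^{2}+|G(v')|^{2})$ and use the ``$O(1)$ per row, $O(1)$ per column'' hypothesis on $D_{\Gamma,h}$ to absorb each quadratic term into $\|G\|_{l^{2}}^{2}$, producing a bound of $\lesssim p^{1-4\alpha}\|G\|_{l^{2}}^{2}$. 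For pairs outside $D_{\Gamma,h}$, condition \eqref{condition: 4 K} directly gives $|I_{\Gamma}|\lesssim p^{-\beta}$; summing trivially over all $p^{2}$ such pairs and using $\|G\|_{l^{1}}\leq\sqrt{p}\,\|G\|_{l^{2}}$ yields $\lesssim p^{1-\beta}\|G\|_{l^{2}}^{2}$. Adding the two contributions and recalling $\gamma_{0}=\min\{\alpha-\tfrac{1}{4},\tfrac{\beta}{4}-\tfrac{1}{4}\}$ gives $S\lesssim p^{-4\gamma_{0}}\|G\|_{l^{2}}^{2}$, which after taking square roots delivers \eqref{eq: bilinear estimate}.

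The main obstacle --- and indeed the structural reason for introducing the notion of a generalized diagonal --- is the diagonal part of $S$. A naive bound there via $\|G\|_{l^{1}}^{2}$ would produce an extra factor of $p$ and ruin the estimate; it is precisely the bounded-fibers property of $D_{\Gamma,h}$ that allows the AM--GM split to trade $|G(v)\overline{G(v')}|$ for $|G(v)|^{2}+|G(v')|^{2}$ while absorbing the $(v,v')$-sum into $\|G\|_{l^{2}}^{2}$. All other ingredients (Cauchy--Schwarz, the expansion producing $I_{\Gamma}$, and the off-diagonal bound) are routine once the $TT^{*}$ structure is recognized.
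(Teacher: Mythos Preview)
Your proposal is correct and is essentially the same $TT^{*}$ argument the paper gives: the paper packages the Cauchy--Schwarz step as ``by duality, reduce to the operator bound $\|T(g)\|_{l^{2}}\lesssim p^{-2\gamma_{0}}\|g\|_{l^{2}}$'' for $T(g)(x)=\sum_{y}g(y)K_{\Gamma}(x,y)\overline{K_{\Gamma}(x-h,y+h)}$, and then computes $\|T(g)\|_{l^{2}}^{2}$ exactly as your $S$, with the identical diagonal/off-diagonal split and the same two estimates $p^{1-4\alpha}\|g\|_{l^{2}}^{2}$ and $p^{1-\beta}\|g\|_{l^{2}}^{2}$. Your AM--GM justification for the diagonal term just makes explicit what the paper writes in one line.
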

Once this proposition is proved, one can use \eqref{eq: bilinear estimate} and apply Cauchy-Schwarz inequality a few times to \eqref{eq: triple sum} to get the desired estimate \eqref{eq: 2 gamma}.

By duality, it is easy to see that Proposition \ref{prop: bilinear form} can be reduced to the following finite field version of H\"ormander principle (see Theorem 1.1 in \cite{H} for its continuous counterpart):
\begin{lemma} \label{lemma: H}
Fix $h\neq 0$. Let $\vp_1,\vp_2:\ZFp\to\ZFp$ satisfy \eqref{condition: single K} and \eqref{condition: 4 K} (with parameters $\al$ and $\beta$, resp.) in Theorem \ref{main thm}. Let $\gamma_0=\min\{\al-\frac{1}{4},\frac{\be}{4}-\frac{1}{4}\}$. Define an operator
\begin{equation*}
T(g)(x)=\sum_y g(y)K_{\Gamma}(x,y)\overline{K_{\Gamma}(x-h,y+h)}.
\end{equation*}
Then
\begin{equation*}
\|T(g)\|_{l^2}\lesssim \frac{1}{p^{2\gamma_0}}\|g\|_{l^2}.
\end{equation*}
\begin{proof}
We will show that
\begin{equation} \label{eq: goal of the Hormander}
\|T(g)\|_{l^2}^2\lesssim \frac{1}{p^{4\gamma_0}}\|g\|_{l^2}^2.
\end{equation}
A straightforward calculation gives
\begin{align}
\|T(g)\|_{l^2}^2&=\sum_{x,y,y'}g(y)\overline{g(y')}K_{\Gamma}(x,y)\overline{K_{\Gamma}(x-h,y+h)}\overline{K_{\Gamma}(x,y')}K_{\Gamma}(x-h,y'+h) \notag \\
&\le \sum_{(y,y')\in D_{\Gamma, h}}|g(y)||g(y')||I|+\sum_{(y,y')\notin D_{\Gamma, h}}|g(y)||g(y')||I|, \label{eq: diag and off diag}
\end{align}
where $D_{\Gamma, h}$ is the generalized diagonal set in condition \eqref{condition: 4 K} and
$$
I_{\Gamma}=\sum_x K_{\Gamma}(x,y)\overline{K_{\Gamma}(x-h,y+h)}\overline{K_{\Gamma}(x,y')}K_{\Gamma}(x-h,y'+h).
$$

We estimate the two terms in \eqref{eq: diag and off diag} by different methods. Using the definition of generalized diagonal set and
 the trivial estimate $I_{\Gamma} \lesssim  \frac{p}{p^{4\al}}$ from \eqref{condition: single K}, the first term in \eqref{eq: diag and off diag} is estimated by
\begin{equation} \label{eq: bad pairs}
\sum_{(y,y')\in D_{\Gamma, h}}|g(y)||g(y')||I_{\Gamma}| \lesssim  \sum_y|g(y)|^2\frac{p}{p^{4\al}}=\frac{1}{p^{4\al-1}}\|g\|_{l^2}^2.
\end{equation}
For the second term in \eqref{eq: diag and off diag}, we use the assumption $I_{\Gamma}\lesssim \frac{1}{p^{\be}}$ for $(y,y')\notin D_{\Gamma, h}$ and Cauchy-Schwarz inequality to get the estimate
\begin{equation} \label{eq: good pairs}
\sum_{(y,y')\notin D_{\Gamma, h}}|g(y)||g(y')||I_{\Gamma,h}| \lesssim  \frac{\sqrt{p}\sqrt{p}}{p^{\be}}\|g\|_{l^2}^2=\frac{1}{p^{\be-1}}\|g\|_{l^2}^2.
\end{equation}
Combining \eqref{eq: bad pairs} and \eqref{eq: good pairs},  we obtain
\begin{equation*}
\|T(g)\|_{l^2}^2\lesssim  \max\left\{\frac{1}{p^{4\al-1}},\frac{1}{p^{\be-1}} \right\}\|g\|_{l^2}^2=\frac{1}{p^{4\gamma_0}}\|g\|_{l^2}^2,
\end{equation*}
which is exactly what we aimed for: \eqref{eq: goal of the Hormander}.
\end{proof}
\end{lemma}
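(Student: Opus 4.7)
The plan is to bound $\|T(g)\|_{l^2}^2$ directly by squaring out and exchanging the order of summation. Writing
\begin{equation*}
\|T(g)\|_{l^2}^2=\sum_x\Big|\sum_y g(y)K_{\Gamma}(x,y)\overline{K_{\Gamma}(x-h,y+h)}\Big|^2
\end{equation*}
and pulling the sum over $x$ inside yields
\begin{equation*}
\|T(g)\|_{l^2}^2=\sum_{y,y'} g(y)\overline{g(y')}\,I_{\Gamma}(y,y'),
\end{equation*}
where $I_\Gamma$ is precisely the fourth-moment kernel sum defined in \eqref{defI}. This is a natural $TT^*$-style reformulation that converts the operator norm estimate into a question about the size of $I_\Gamma$, which is exactly the quantity controlled by condition \eqref{condition: 4 K}.

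Next I would split the double sum over $(y,y')$ according to whether $(y,y')$ lies in the generalized diagonal set $D_{\Gamma,h}$ supplied by condition \eqref{condition: 4 K}. On $D_{\Gamma,h}$ the sharp bound on $I_\Gamma$ is not available, so I would fall back on the pointwise estimate $|K_{\Gamma}|\lesssim p^{-\al}$ from condition \eqref{condition: single K} and estimate $|I_\Gamma|$ trivially by $p\cdot p^{-4\al}$. Combining this with the generalized-diagonal property (each $y$ has only $O(1)$ partners $y'$ and vice versa) and the elementary inequality $2|g(y)||g(y')|\leq |g(y)|^2+|g(y')|^2$, the diagonal piece contributes at most a constant times $p^{1-4\al}\|g\|_{l^2}^2$.

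For the off-diagonal piece I would invoke condition \eqref{condition: 4 K} directly to get $|I_\Gamma|\lesssim p^{-\be}$, then control the $g$-factors by Cauchy--Schwarz: $\sum_{y,y'}|g(y)||g(y')|=\big(\sum_y|g(y)|\big)^2\leq p\|g\|_{l^2}^2$. This gives a contribution of order $p^{1-\be}\|g\|_{l^2}^2$. Summing the two contributions,
\begin{equation*}
\|T(g)\|_{l^2}^2\lesssim \max\{p^{1-4\al},\,p^{1-\be}\}\|g\|_{l^2}^2=p^{-4\gamma_0}\|g\|_{l^2}^2,
\end{equation*}
since $4\gamma_0=\min\{4\al-1,\be-1\}$; this is the square of the asserted bound.

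Structurally, the lemma is a routine $TT^*$ dichotomy between a sparse but poorly-cancelling diagonal piece, handled purely by counting and the trivial pointwise bound, and a bulk off-diagonal piece, handled by the sharp cancellation bound on $I_\Gamma$. The only choice to be made is the shape of the split; the generalized-diagonal notion was introduced precisely so that the diagonal sum is essentially $\|g\|_{l^2}^2$ rather than a larger quantity. The real work does not live inside this lemma but in securing the two hypotheses: the single-kernel decay is a Weil-type estimate, while the off-diagonal bound on the four-fold sum $I_\Gamma$ is the main analytic/algebro-geometric input of the paper, and it is the latter that will ultimately dictate the exponent $\gamma_0$.
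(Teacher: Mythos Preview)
Your proposal is correct and follows essentially the same $TT^*$ argument as the paper: expand $\|T(g)\|_{l^2}^2$, split over $(y,y')\in D_{\Gamma,h}$ versus its complement, use the trivial bound $|I_\Gamma|\lesssim p^{1-4\al}$ plus the generalized-diagonal sparsity on the first piece and the bound $|I_\Gamma|\lesssim p^{-\be}$ plus Cauchy--Schwarz on the second. The only cosmetic difference is that you spell out the AM--GM step $2|g(y)||g(y')|\le |g(y)|^2+|g(y')|^2$ for the diagonal sum, whereas the paper leaves that implicit.
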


\section{Proof of Theorem \ref{special thm}} \label{section: verify}
\setcounter{equation}0
To prove Theorem \ref{special thm}, first note that we can assume without loss of generality that the two polynomials $\vp_1$ and $\vp_2$ have distinct leading terms. This is because we can rewrite \eqref{eq in special thm} in its dual form as
\begin{equation} \label{var eq in special thm}
\left| \mathbb E_{x,y}f_1(x+\vp_1(y)) f_2(x+\vp_2(y))f_3(x)-\mathbb E[f_1]{\mathbb E}[f_2] \mathbb E[f_3]\right|\lesssim  p^{-1/8}\|f_1\|_2\|f_2\|_2\|f_3\|_2,
\end{equation}
and do a change of variable $x\to x+\vp_1(y)$ on the left-hand-side of \eqref{var eq in special thm} if necessary (We are indebted to Sarah Peluse for pointing this out).

We will verify that for linearly independent polynomials $\vp_1, \vp_2\in\ZFp[X]$ with distinct leading terms, the conditions \eqref{condition: phi_1},
\eqref{condition: single K} and \eqref{condition: 4 K} in Theorem \ref{main thm} are satisfied with parameters $\theta=\frac{1}{2}$, $\al=\frac{1}{2}$ and $\be=\frac{3}{2}$, resp, and thus prove Theorem \ref{special thm}
 using Theorem \ref{main thm}.

Let $d_1$ and $d_2$ denote the degrees of $\vp_1$ and $\vp_2$, resp. Without loss of generality, we assume that $d_1\le d_2$.

Conditions \eqref{condition: phi_1} and \eqref{condition: single K} can be verified in the same way, using the well-known square-root cancellation result of
Weil \cite{W} (see also \cite{CU}). Therefore, $\theta=\al=\frac{1}{2}$. Note that the linearly independence of the two polynomials is crucial to obtain
\eqref{condition: single K}.

Now we focus on the verification of condition \eqref{condition: 4 K}. We will from now on write for simplicity that $K=K_{\Gamma}$ and $I=I_{\Gamma}$. Recall that for $y\neq 0$, $$
K(x,y)=\frac{1}{p}\sum_{z\in\ZFp}e_p(x\vp_1(z)+y\vp_2(z)).
$$
 Plug in the definition of $K$, put the sum over $x$ innermost, and we see that
\begin{equation*}
\begin{split}
I&=\sum_{x\in\ZFp}K(x,y)\overline{K(x-h,y+h)}\overline{K(x,y')}K(x-h,y'+h)\\
&=\frac{1}{p^4}\sum_x\sum_{z_1,z_2,z_3,z_4}e_p[x\vp_1(z_1)+y\vp_2(z_1)-(x-h)\vp_1(z_2)-(y+h)\vp_2(z_2)-x\vp_1(z_3)\\
&\qquad -y'\vp_2(z_3)+(x-h)\vp_1(z_4)+(y'+h)\vp_2(z_4)]\\
&=\frac{1}{p^3}\sum_{\substack{z_1,z_2,z_3,z_4\\G(z_1,z_2,z_3,z_4)=0}}e_p(F(z_1,z_2,z_3,z_4)),
\end{split}
\end{equation*}
where
\begin{equation*}
\begin{split}
&G(z_1,z_2,z_3,z_4)=\vp_1(z_1)-\vp_1(z_2)-\vp_1(z_3)+\vp_1(z_4),\\
&F(z_1,z_2,z_3,z_4)=y\vp_2(z_1)+h\vp_1(z_2)-(y+h)\vp_2(z_2)-y'\vp_2(z_3)-h\vp_1(z_4)+(y'+h)\vp_2(z_4).
\end{split}
\end{equation*}
It remains to get the estimate
\begin{equation} \label{eq: final exp sum estimate}
\sum_{\substack{z_1,z_2,z_3,z_4\\G(z_1,z_2,z_3,z_4)=0}}e_p(F(z_1,z_2,z_3,z_4))\lesssim p^{\frac{3}{2}}.
\end{equation}

We need machinery of algebraic geometry to prove \eqref{eq: final exp sum estimate}. To benefit readers who are not very familiar with algebraic geometry, we first prove \eqref{eq: final exp sum estimate} in a simpler case. We assume $\vp_1(z)=z$, and consequently $\vp_2$ has degree at least $2$ by the linearly independence assumption. In this case, the restriction $G(z_1,z_2,z_3,z_4)=0$ can be dropped once $z_4$ is replaced with $z_2+z_3-z_1$. Therefore, \eqref{eq: final exp sum estimate} is reduced to
\begin{equation} \label{eq: goal for exp sum: special case}
\sum_{z_1,z_2,z_3}e_p(F(z_1,z_2,z_3,z_2+z_3-z_1)) \lesssim  p^{\frac{3}{2}}.
\end{equation}
Such character sum is studied by Deligne in his resolution of Weil conjectures:
\begin{thm}[Theorem 8.4, \cite{De}]
Let $f\in\ZFp[X_1,\dots,X_n]$ be a polynomial of degree $d\ge 1$. Suppose that $d$ is prime to $p$, and the projective hypersurface defined by the highest degree homogeneous term $f_d$ is smooth, i.e., the gradient of $f_d$ is non-zero at any point in $\{f_d=0\}\setminus \{\mathbf{0}\}$. Then
\begin{equation*}
\sum_{z_1,\dots,z_n}e_p(f(z_1,\dots,z_n))\lesssim  p^{\frac{n}{2}}.
\end{equation*}
\end{thm}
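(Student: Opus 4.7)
The plan is to use the $\ell$-adic \'etale cohomology machinery of Grothendieck-Deligne, which is the natural setting for exponential sums of this shape. First, let $\mathcal{L}_\psi$ denote the Artin-Schreier sheaf on $\mathbb{A}^1_{\ZFp}$ attached to the additive character $\psi(x) = e_p(x)$. Then $f^*\mathcal{L}_\psi$ is a rank-one lisse $\ell$-adic sheaf on $\mathbb{A}^n$ whose Frobenius trace at each $\ZFp$-point $z$ equals $e_p(f(z))$, and the Grothendieck-Lefschetz trace formula gives
\begin{equation*}
\sum_{z \in \ZFp^n} e_p(f(z)) = \sum_{i=0}^{2n} (-1)^i \mathrm{Tr}\bigl(\mathrm{Frob}_p \mid H^i_c(\mathbb{A}^n_{\overline{\ZFp}}, f^*\mathcal{L}_\psi)\bigr).
\end{equation*}
The problem thus reduces to controlling the compactly supported cohomology groups and the Frobenius action on them.

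The technical heart of the argument is a vanishing and dimension statement: when $\{f_d = 0\} \subset \mathbb{P}^{n-1}$ is smooth and $\gcd(d,p)=1$, I would show that $H^i_c(\mathbb{A}^n_{\overline{\ZFp}}, f^*\mathcal{L}_\psi) = 0$ for $i \neq n$, while $\dim H^n_c \leq (d-1)^n$. The approach is to compactify $\mathbb{A}^n \hookrightarrow \mathbb{P}^n$ and analyze the wild ramification of $f^*\mathcal{L}_\psi$ along the hyperplane at infinity $H_\infty \cong \mathbb{P}^{n-1}$. The smoothness hypothesis on $f_d$ forces the Swan conductor to take the smallest possible value (essentially $d-1$ along the relevant divisors), and a Leray spectral sequence (or, equivalently, Laumon's stationary phase principle) then confines the nonzero cohomology to the middle degree.

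Once the vanishing is in hand, Deligne's Weil II yields purity: every eigenvalue of Frobenius on $H^n_c$ has absolute value exactly $p^{n/2}$. Substituting into the trace formula and using $\dim H^n_c \leq (d-1)^n$ gives
\begin{equation*}
\left|\sum_{z \in \ZFp^n} e_p(f(z))\right| \leq (d-1)^n \cdot p^{n/2} \lesssim p^{n/2},
\end{equation*}
which is the desired estimate.

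The main obstacle is the vanishing / dimension step, which depends essentially on the smoothness hypothesis: if $f_d$ were singular, nonzero cohomology in lower degrees could survive and contribute Frobenius eigenvalues whose weights would spoil the square-root cancellation. The coprimality $\gcd(d,p)=1$ is needed to prevent degeneracies in the local Swan conductor computation at infinity. A fully self-contained proof requires the entire machinery of Deligne's Weil II and is well beyond a brief sketch; the above merely indicates the shape of Deligne's original argument, which is the one I would follow.
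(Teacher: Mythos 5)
The paper offers no proof of this statement: it is quoted verbatim as Theorem 8.4 of Deligne's \emph{La conjecture de Weil. I} and used as a black box, so there is no internal argument to compare yours against. Your sketch is a faithful outline of the standard cohomological proof (Artin--Schreier sheaf, Grothendieck--Lefschetz trace formula, concentration of $H^\bullet_c$ in middle degree with $\dim H^n_c=(d-1)^n$ under the smoothness and $\gcd(d,p)=1$ hypotheses, then purity), and the places where you defer to heavy machinery are exactly the places where the content \emph{is} Deligne's theorem, so deferring is appropriate for a cited classical result. One small historical remark: Deligne's original proof of Theorem 8.4 does not invoke Weil II; it reduces the exponential sum, via an Artin--Schreier-type covering, to the cohomology of a smooth projective hypersurface and applies the main theorem of Weil I directly, whereas the route you describe (purity of $H^n_c(\mathbb{A}^n,f^*\mathcal{L}_\psi)$ via Weil II) is the later, now-standard repackaging from SGA~$4\tfrac12$; both are valid and yield the same bound.
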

For notational convenience, we write $d=d_2$, the degree of $\vp_2$. Let $bz^d$ denote the leading term of $\vp_2(z)$. Then the highest degree homogeneous term of $F(z_1,z_2,z_3,z_2+z_3-z_1)$ is
$$
F_d(z_1,z_2,z_3)=byz_1^d-b(y+h)z_2^d-by'z_3^d+b(y'+h)(z_2+z_3-z_1)^d.
$$
We need to verify the smoothness $\{F_d=0\}$. By straightforward calculations, $\nabla F_d=\mathbf{0}$ implies
\begin{equation*}
\begin{cases}
z_1=\left(\frac{y'+h}{y}\right)^{\frac{1}{d-1}}(z_2+z_3-z_1)\\
z_2=\left(\frac{y'+h}{y+h}\right)^{\frac{1}{d-1}}(z_2+z_3-z_1)\\
z_3=\left(\frac{y'+h}{y'}\right)^{\frac{1}{d-1}}(z_2+z_3-z_1)
\end{cases}
\end{equation*}
The above system has nonzero solutions only when
\begin{equation} \label{eq: bad case}
\left(\frac{y'+h}{y+h}\right)^{\frac{1}{d-1}}+\left(\frac{y'+h}{y'}\right)^{\frac{1}{d-1}}-\left(\frac{y'+h}{y}\right)^{\frac{1}{d-1}}=1
\end{equation}

Put those pairs $(y,y')$ satisfying \eqref{eq: bad case} as a set $D_{\Gamma, h}$, and it is not hard to check that $D_{\Gamma, h}$ is generalized diagonal. By Deligne's Theorem, \eqref{eq: goal for exp sum: special case} holds for any $(y,y')\notin D_{\Gamma, h}$. This finishes the verification of condition \eqref{condition: 4 K} with $\be=\frac{3}{2}$, assuming $\vp_1(z)=z$.

Now we turn to the general case. In \cite{Katz1980}, Katz generalizes Deligne's theorem to exponential sums over smooth affine varieties, and in \cite{Katz1999}, to singular algebraic varieties. We need the following special case of \cite[Theorem 4]{Katz1999} (The reader could skip its long proof and use it as a ``black box'' on an early reading of the paper):
\begin{theorem} \label{thm: Katz}
Let $F,G\in \ZFp[X_1,\dots,X_4]$. Assume that the degree of $F$ is indivisible by $p$, the homogeneous leading term of $G$ defines a smooth projective hypersurface, and the homogeneous leading terms of $G$ and that of $F$ together define a smooth co-dimension-$2$ variety in the projective space. Then \eqref{eq: final exp sum estimate} holds, i.e.,
\begin{equation*}
\sum_{\substack{z_1,z_2,z_3,z_4\\G(z_1,z_2,z_3,z_4)=0}}e_p(F(z_1,z_2,z_3,z_4))\lesssim p^{\frac{3}{2}}.
\end{equation*}

\end{theorem}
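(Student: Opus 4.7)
The plan is to interpret the sum as an exponential sum on the affine variety $V := \{G = 0\} \subset \mathbb{A}^4$ over $\ZFp$ (of dimension $3$), apply the Grothendieck--Lefschetz trace formula with the Artin--Schreier sheaf $\mathcal L_\psi(F|_V)$ attached to $\psi = e_p$, and bound each term using Deligne's Weil II.

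First I would use the two smoothness hypotheses to pin down the geometry. The condition that the leading form $G_d$ of $G$ defines a smooth projective hypersurface in $\mathbb{P}^3$ (the hyperplane at infinity of $\mathbb{P}^4$) ensures that the projective closure $\bar V \subset \mathbb{P}^4$ is smooth of dimension $3$ and meets infinity transversally; in particular $V$ itself is smooth affine of pure dimension $3$. The second hypothesis, that the leading forms of $G$ and $F$ jointly cut out a smooth codimension-$2$ subvariety of $\mathbb{P}^3$, is a transversality statement controlling the ramification of the phase $F$ along the boundary $\bar V \setminus V$.

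The trace formula then gives
$$\sum_{x \in V(\ZFp)} \psi(F(x)) = \sum_{i=0}^{6} (-1)^i \mathrm{Tr}\bigl(\mathrm{Frob}_p \mid H^i_c(V_{\overline{\ZFp}}, \mathcal L_\psi(F|_V))\bigr),$$
and Deligne's purity theorem bounds each Frobenius eigenvalue on $H^i_c$ by $p^{i/2}$. To obtain the claimed $p^{3/2}$ bound it suffices to establish (i) vanishing of $H^i_c$ for $i \ne 3$ (concentration in middle degree), and (ii) a uniform bound $\dim H^i_c \lesssim 1$ for all $i$, with constant depending only on the degrees of $F$ and $G$.

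The main obstacle is establishing (i) and (ii). By Artin vanishing combined with Poincar\'e duality, one half of (i) --- namely $H^i_c = 0$ for $i < 3$ --- is automatic, since $V$ is smooth affine of dimension $3$ and $\mathcal L_\psi(F|_V)$ is lisse of rank $1$; but the vanishing $H^i_c = 0$ for $i > 3$ is far more delicate. It requires that $\mathcal L_\psi(F|_V)$ be sufficiently wildly ramified along the boundary $\bar V \setminus V$, so that its pushforward to $\bar V$ carries no cohomology above middle degree. This wildness, together with the uniform Euler--Poincar\'e bound on $\dim H^3_c$, is precisely what the two smoothness hypotheses are designed to guarantee, placing us exactly in the setting of Katz's generalization of Deligne's theorem in \cite{Katz1999}. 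The technical heart of the argument is verifying that our hypotheses match Katz's framework so that his uniform cohomological bounds can be invoked as a black box to finish the proof.
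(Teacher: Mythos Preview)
Your proposal is correct and arrives at the same endpoint as the paper: both reduce the statement to Katz's theorem \cite[Theorem~4]{Katz1999} and invoke it as a black box. The paper's proof consists entirely of the concrete verification you name in your final sentence---choosing $N=4$, $X=\{\tilde G=0\}\subset\mathbb P^4$, $L=z_5$, $H=\tilde F$, and checking that Katz's hypotheses (H1)$'$, (H2) hold with $n=3$ and $\epsilon=\delta=-1$---whereas your write-up spends most of its space sketching the \'etale-cohomological mechanism (trace formula, Artin vanishing, wild ramification at infinity) that underlies Katz's own proof rather than carrying out that verification.
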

\begin{proof} We explain in detail how to realize this theorem as a special case of Katz's theorem. We will try to explain this derivation for mathematicians who are not experts in algebraic geometry. (However, Katz's proof requires much more advanced algebraic geometry than we can go into here).

We first restate part of Katz's theorem. Then we will explain Katz's notation and how it applies to our case.

\begin{thm} [Katz, Theorem 4 \cite{Katz1999}]
Let $N$ and $d$ be natural numbers, let $k$ be a finite field in which $d$ is invertible, let $\psi: k \to \mathbb C^\times$ be an additive character. Let $X$ be a closed subscheme of $\mathbb P^N$ of dimension $d$. Let $L$ be a section of $H^0(X,\mathcal O(1))$ and $H$ a section of $H^0(X,\mathcal O(D))$. Let $V,f,\epsilon,\delta$ be defined as in \cite[pp. 878-879]{Katz1999}. If assumptions (H1)' and (H2) of \cite[pp. 878]{Katz1999} hold, and $\epsilon\leq \delta$, then \[ \left| \sum_{x \in V(k)} \psi( f(x)) \right| \leq C \times (\#k)^{(n+1+\delta)/2}\] where $C$ is a constant depending only on $N, d$, and the number and degree of the equations defining $X$.
\end{thm}

We will choose our data so that $k = \mathbb F_p$,  $V(k) =\left\{z_1,z_2,z_3,z_4 \in \mathbb F_p \mid G(z_1,z_2,z_3,z_4)=0\right\}$, $\psi( f(x)) =e_p(F(z_1,z_2,z_3,z_4))$ for $x=(z_1,z_2,z_3,z_4) \in V(k)$, $n=3$, and $\epsilon=\delta=-1$. Furthermore $C$ will depend only on the degree of $F$ and $G$.

Examining Katz's bound, and plugging in these statements, it is clear that if we can in fact choose our data in this way, while verifying Katz's conditions, we obtain exactly our stated bound.

In what remains, we will first explain all of Katz's notation that is needed to choose $(X,L,H)$ so that
$$V(k) =  \left\{z_1,z_2,z_3,z_4 \in \mathbb F_p \mid G(z_1,z_2,z_3,z_4)=0\right\}  \text{and } \psi( f(x)) =e_p(F(z_1,z_2,z_3,z_4)),$$
and second we will verify (H1)' and (H2) and calculate $\epsilon,\delta$, explaining more of Katz's notation along the way.

For the first part, because we are interested in the $\mathbb F_p$-points $V(\mathbb F_p)$ of a scheme $V$, we will describe schemes mostly by their set of $\mathbb F_p$-points (though schemes in fact have more structure than this.) First, we take $N= 4$, so $\mathbb P^N = \mathbb P^4$ is the space whose $\mathbb F_p$-points $\mathbb P^4(\mathbb F_p)$ are the set of quintuples $(z_1,z_2,z_3,z_4,z_5) \in \mathbb F_p$, not all zero, up to multiplication by nonzero scalars.  We let $\tilde{G}$ be the homogenization of $G$, where we add additional powers of $z_5$ to all the non-leading terms of $G$ to make every term have equal degree.  Let $X$ be the vanishing set of $\tilde{G}$, so that $X(\mathbb F_p)$ is the subset of $\mathbb P^4(\mathbb F_p)$ consisting of tuples $(z_1,z_2,z_3,z_4,z_5)$ with $\tilde{G}(z_1,z_2,z_3,z_4,z_5)=0$. We must choose $L$ as an element of $H^0(X,\mathcal O(1))$, which is the space of linear functions in the variables $z_1,z_2,z_3,z_4,z_5$, and we choose $L= z_5$. Now Katz defines $V$ to be the locus in $X$ where $L$ is nonzero. Hence $V(\mathbb F_p)$ is the set of tuples $(z_1,z_2,z_3,z_4,z_5)$, with $z_5$ nonzero, up to scalar multiplication, that solve the equation $\tilde{G}(z_1,z_2,z_3,z_4,z_5)=0$. For each such tuple there exists a unique scalar multiplication that sends $z_5$ to $1$, so we can express it equally as the set of tuples $(z_1,z_2,z_3,z_4)$ with $\tilde{G}(z_1,z_2,z_3,z_4,1)=0$. By construction, $\tilde{G}(z_1,z_2,z_3,z_4,1) = G(z_1,z_2,z_3,z_4)$, so $V(\mathbb F_p) =\left\{z_1,z_2,z_3,z_4 \in \mathbb F_p \mid G(z_1,z_2,z_3,z_4)=0\right\}$, as desired.

Next, because $e_p: \mathbb F_p \to \mathbb C^\times$ is an additive character, we set $\psi= e_p$. We then need to choose $H$, a homogeneous form of degree $d$ in the variables $z_1,z_2,z_3,z_4,z_5$, so that $f(x)= F(z_1,z_2,z_3,z_4)$. Katz defines $f$ as $H/L^d$. We take $d$ to be the degree of $F$ and $H$ to be the homogenization $\tilde{F}$ of $F$, just as we did with $G$. Because we are using the bijection between $4$-tuples and $5$-tuples that sends $(z_1,z_2,z_3,z_4)$ to $(z_1,z_2,z_3,z_4,1)$, we need to check that $f(z_1,z_2,z_3,z_4,1)=F(z_1,z_2,z_3,z_4)$. This follows because

 \[f(z_1,z_2,z_3,z_4,1) = \frac{\tilde{F} (z_1,z_2,z_3,z_4,1)}{ L(z_1,z_2,z_3,z_4,1)^d }=\frac{ \tilde{F} (z_1,z_2,z_3,z_4,1)}{1^d} = F(z_1,z_2,z_3,z_4).\]

 We have therefore shown how to specialize the left side of Katz's bound to the left side of our own bound. It remains to check Katz's assumptions and also the assumptions we made in applying Katz's bound. These are as follows:

 \begin{enumerate}

 \item $d$ is invertible in $k$.

 \item Katz's assumption (H1)' holds.

 \item Katz's assumption (H2) holds.

 \item $\delta=-1$.

 \item $\epsilon=-1$.

 \item $n=3$.

 \item $C$ depends only on the degree of $F$ and $G$. \end{enumerate}

 The first condition, that $d$ is invertible in $k$, is easy to interpret, as we set $k= \mathbb F_p$ and set $d$ to equal the degree of $F$, so this is equivalent to the degree of $F$ being prime to $p$, which we have already assumed in the statement of the theorem.

 Katz's assumption (H1)' is that $X$ is Cohen-Macauley and equidimensional of dimension $n\geq 1$. Because $H$ is the hypersurface defined by a single equation $\tilde{G}=0$ in $\mathbb P^4$, a smooth variety of dimension $4$, it is automatically Cohen-Macauley of dimension $3$. This verifies assumptions (2) and (6).

 Katz defines $C$ as an explicit function of his numerical data, which consists of $N$, the number $r$ of equations needed to define $X$, the degrees of those equations, and $d$. In our case $N=4$, $r=1$, the degree of the unique equation needed to define $X$ is the degree of $G$, and $d$ is the degree of $F$. Hence $C$ is some explicit function of those degrees (assumption (7)).

 Katz defines $\epsilon$ as the dimension of the singular locus of the scheme-theoretic intersection $X \cap L$. For us $L$ is the closed subset of $\mathbb P^4$ where $z_5=0$. (Katz abuses notation slightly to use $L$ also to refer to the vanishing locus of $L$.) So $X \cap L$ is the closed subset where $z_5=0$ and $\tilde{G}=0$. Because $z_5=0$, we can ignore $z_5$ and work in $\mathbb P^3$ with coordinates $z_1,z_2,z_3,z_4$. When we do this, because all non-leading monomials of $G$ were multiplied by a positive power of $z_5$ in $\tilde{G}$, all non-leading monomials become $0$ and we are left with just the zero-locus. So $X \cap L$ is the vanishing locus of the leading term of $G$ in $\mathbb P^3$, which we assumed in the statement of the theorem is a nonsingular hypersurface, so its singular locus is empty, which by convention Katz assigns dimension $-1$, verifying $\epsilon=-1$ (assumption (5)).

  Katz defines $\delta$ as the dimension of the singular locus of the scheme-theoretic intersection $X \cap L \cap H$, and (H2) is his assumption that this has dimension $n-2$. This is the joint vanishing locus of $\tilde{G}, z_5,$ and $\tilde{F}$ in $\mathbb P^4$, which for the same reason as before is the vanishing locus of the leading terms of $F$ and $G$ in $\mathbb P^3$. Because we assumed this is a smooth subscheme of codimension $2$, it has dimension $3-2=n-2$, verifying condition (H2), and its singular locus is empty and has dimension $-1$, verifying $\delta=-1$ (assumptions (3) and (4)).
\end{proof}

Now we are ready to prove \eqref{eq: final exp sum estimate} using Theorem \ref{thm: Katz}. The first two conditions in the theorem are easy to check. To check the third condition, we handle two cases separately: $d_1<d_2$ and $d_1=d_2$.

First assume $d_1<d_2$. Let $az^{d_1}$ and $bz^{d_2}$ denote the leading term of $\vp_1$ and $\vp_2$, resp. The homogeneous leading term of $G$ and $F$ are
\begin{equation*}
\begin{split}
G_{d_1}(z_1,z_2,z_3,z_4):=az_1^{d_1}-az_2^{d_1}-az_3^{d_1}+az_4^{d_1},
\end{split}
\end{equation*}
and
\begin{equation*}
F_{d_2}(z_1,z_2,z_3,z_4):=byz_1^{d_2}-b(y+h)z_2^{d_2}-by'z_3^{d_2}+b(y'+h)z_4^{d_2},
\end{equation*}
resp.
We need to show that the Jacobian matrix
\begin{equation*}
J=
\begin{bmatrix}
\nabla G_{d_1}\\ \nabla F_{d_2}
\end{bmatrix}=
\begin{bmatrix}
d_1az_1^{d_1-1} & -d_1az_2^{d_1-1} & -d_1az_3^{d_1-1} & d_1az_4^{d_1-1}\\
d_2byz_1^{d_2-1} & -d_2b(y+h)z_2^{d_2-1} & -d_2by'z_3^{d_2-1} & d_2b(y'+h)z_4^{d_2-1}
\end{bmatrix}
\end{equation*}
has full rank at any point in $\{G_{d_1}=F_{d_2}=0\}\setminus\{\mathbf{0}\}$. When $J$ has rank less than $2$, assuming $z_1z_2z_3z_4\neq 0$, we can solve for each $z_i$ and plug in $G_{d_1}=0$ to get the equation
\begin{equation} \label{equation for d1 less than d2 case}
\left(\frac{1}{y}\right)^{\frac{d_1}{d_2-1}}-\left(\frac{1}{y+h}\right)^{\frac{d_1}{d_2-1}}-\left(\frac{1}{y'}\right)^{\frac{d_1}{d_2-1}}+\left(\frac{1}{y'+h}\right)^{\frac{d_1}{d_2-1}}=0
\end{equation}
If one or two of the four variables $z_1,z_2,z_3,z_4$ are zero, then a new equation can be obtained by deleting the corresponding term(s) in the above equation. The solutions to \eqref{equation for d1 less than d2 case} and its variants lie in a generalized diagonal set. So we can apply Theorem \ref{thm: Katz} for pairs $(y,y')$ outside this set.

Secondly consider the case $d_1=d_2=d$. The homogeneous leading term of $G$ and $F$ are
\begin{equation*}
\begin{split}
G_{d}(z_1,z_2,z_3,z_4):=az_1^{d}-az_2^{d}-az_3^{d}+az_4^{d},
\end{split}
\end{equation*}
and
\begin{equation*}
F_{d}(z_1,z_2,z_3,z_4):=byz_1^{d}-(b(y+h)-ah)z_2^{d}-by'z_3^{d}+(b(y'+h)-ah)z_4^{d},
\end{equation*}
resp. The Jacobian matrix becomes
\begin{equation*}
J=
\begin{bmatrix}
\nabla G_{d}\\ \nabla F_{d}
\end{bmatrix}=
\begin{bmatrix}
daz_1^{d-1} & -daz_2^{d-1} & -daz_3^{d-1} & daz_4^{d-1}\\
dbyz_1^{d-1} & -d(b(y+h)-ah)z_2^{d-1} & -dby'z_3^{d-1} & d(b(y'+h)-ah)z_4^{d-1}
\end{bmatrix}
\end{equation*}
When $z_1z_2z_3z_4\neq 0$, $J$ has rank $1$ only when
\begin{equation} \label{equation for d1=d2}
by=b(y+h)-ah=by'=b(y'+h)-ah.
\end{equation}
One or two terms in the above equation can be dropped if the corresponding variable is zero. Since we assume that $\vp_1$ and $\vp_2$ have distinct leading terms, $a\neq b$. It is then easy to see that the solutions to \eqref{equation for d1=d2} and its variants form a generalized diagonal set. So Theorem \ref{thm: Katz} applies in most cases, and we are done.

\bigskip
\begin{acknowledgement}
The first author would like to thank Ping Xi, Dingxin Zhang and Sarah Peluse for many helpful discussions. The third author was supported by Dr. Max R\"ossler, the Walter Haefner Foundation and  the ETH Zurich Foundation. The authors thank Ben Green and Kannan Soundararajan for bringing Peluse's paper \cite{P} into their attention.
\end{acknowledgement}


\begin{thebibliography}{20}
\bibitem{Ber} V.~Bergelson,
\emph{Ergodic Ramsey theory¡ªan update},
Ergodic theory of $\ZZ^d$ actions (Warwick, 1993¨C1994), 1-61,
London Math. Soc. Lecture Note Ser., 228, Cambridge Univ. Press, Cambridge, 1996.



\bibitem{Bom} E.~Bombieri,
\emph{On exponential sums in finite fields},
Amer. J. Math. 88 1966 71-105



\bibitem{BC}J.~Bourgain, M.C.~Chang,
\emph{Nonlinear Roth type theorems in finite fields}, Israel J. Math. (2017), https://doi.org/10.1007/s11856-017-1558-z

\bibitem{CU} L.~Carlitz, S. ~Uchiyama,
\emph{Bounds for exponential sums},
Duke Math. J. 24 (1957), 37-41.

\bibitem{De} P.~Deligne,
\emph{La conjecture de Weil. I.},
Inst. Hautes \'Etudes Sci. Publ. Math. No. 43 (1974), 273-307.



\bibitem{FK} N.~Frantzikinakis, B. ~kra,
\emph{Polynomial averages converge to the product of integrals},
Israel J. Math. 148 (2005), 267-276.

\bibitem{FKP} \'E.~Fouvry, E.~Kowalski, P.~Michel,
\emph{A study in sums of products}
Philos. Trans. Roy. Soc. A 373 (2015), no. 2040, 20140309, 26 pp.

\bibitem{H}  L.~H\"ormander,
\emph{Oscillatory integrals and multipliers on $FL^p$}
Ark. Mat. 11(1973), 1-11.

\bibitem{Katz1980} N.~Katz,
\emph{Sommes exponentielles}
Ast\'erisque, 79. Soci\'et\'e Math\'ematique de France, Paris, 1980. 209 pp.

\bibitem{Katz1999} N.~Katz,
\emph{Estimates for ``singular'' exponential sums.}
Internat. Math. Res. Notices 1999, no. 16, 875-899.

\bibitem{KMS}  E.~Kowalski, P.~Michel, W.~Sawin,
\emph{Bilinear forms with Kloosterman sums and applications}
Ann. of Math. (2) 186 (2017), no. 2, 413-500.

\bibitem{LAP} X.~Li,
\emph{Bilinear Hilbert transforms along curves I: The monomial case},
Anal. PDE 6 (2013), no. 1, 197¨C-220.


\bibitem{P} S.~Peluse
\emph{Three-term polynomial progressions in subsets of finite fields},
https://arxiv.org/abs/1707.05977

\bibitem{Tao} T.~Tao
\emph{Expanding polynomials over finite fields of large characteristic, and a regularity lemma for definable sets},
Contr. to Disc. Math. 10 (2014), no 1. 22-98.

\bibitem{W} A.~Weil,
\emph{On the Riemann hypothesis in function fields},
Proc. Nat. Acad. Sci. U. S. A. 27, (1941). 345-347.


\end{thebibliography}
\end{document}